%-----------------------------------------------------------------------
% Beginning of proc-l-template.tex
%-----------------------------------------------------------------------
%
%     This is a topmatter template file for PROC for use with AMS-LaTeX.
%
%     Templates for various common text, math and figure elements are
%     given following the \end{document} line.
%
%%%%%%%%%%%%%%%%%%%%%%%%%%%%%%%%%%%%%%%%%%%%%%%%%%%%%%%%%%%%%%%%%%%%%%%%

%     Remove any commented or uncommented macros you do not use.

\documentclass{amsart}

%     If you need symbols beyond the basic set, uncomment this command.
%\usepackage{amssymb}

%     If your article includes graphics, uncomment this command.
%\usepackage{graphicx}

%     If the article includes commutative diagrams, ...
%\usepackage[cmtip,all]{xy}

%     Update the information and uncomment if AMS is not the copyright
%     holder.
%\copyrightinfo{2009}{American Mathematical Society}

\newtheorem{theorem}{Theorem}[section]
\newtheorem{lemma}[theorem]{Lemma}
\newtheorem{proposition}[theorem]{Proposition}
\newtheorem{corollary}[theorem]{Corollary}

\theoremstyle{definition}
\newtheorem{definition}[theorem]{Definition}
\newtheorem{example}[theorem]{Example}

\theoremstyle{remark}
\newtheorem{remark}[theorem]{Remark}

\numberwithin{equation}{section}

\begin{document}

% \title[short text for running head]{full title}
\title{$L^p$ regularity of least gradient functions}

% macro for averaged integrals
\def\Xint#1{\mathchoice
   {\XXint\displaystyle\textstyle{#1}}%
   {\XXint\textstyle\scriptstyle{#1}}%
   {\XXint\scriptstyle\scriptscriptstyle{#1}}%
   {\XXint\scriptscriptstyle\scriptscriptstyle{#1}}%
   \!\int}
\def\XXint#1#2#3{{\setbox0=\hbox{$#1{#2#3}{\int}$}
     \vcenter{\hbox{$#2#3$}}\kern-.5\wd0}}
\def\ddashint{\Xint=}
\def\dashint{\Xint-}

\newcommand{\twopartdef}[4]
{
\left\{
		\begin{array}{ll}
			#1 & #2 \\
			#3 & #4
		\end{array}
	\right.
}

\newcommand{\threepartdef}[6]
{
	\left\{
		\begin{array}{lll}
			#1 & #2 \\
			#3 & #4 \\
			#5 & #6
		\end{array}
	\right.
}

%    Only \author and \address are required; other information is
%    optional.  Remove any unused author tags.

%    author one information
% \author[short version for running head]{name for top of paper}
\author{Wojciech G\'{o}rny}

\address{W. G\'{o}rny: Faculty of Mathematics, Informatics and Mechanics, University of Warsaw, Warsaw, Poland.}
\curraddr{}
\email{w.gorny@mimuw.edu.pl}
\thanks{}

%    \subjclass is required.
\subjclass[2010]{35J20, 35J25, 35J75, 35J92}
\keywords{Least Gradient Problem, Anisotropy, $L^p$ regularity}

\date{\today}

\dedicatory{}

%    "Communicated by" -- provide editor's name; required.
\commby{}

%    Abstract is required.
\begin{abstract}
It is shown that solutions to the anisotropic least gradient problem for boundary data $f \in L^p(\partial\Omega)$ lie in $L^{\frac{Np}{N-1}}(\Omega)$; the exponent is shown to be optimal. Moreover, the solutions are shown to be locally bounded with explicit bounds on the rate of blow-up of the solution near the boundary in two settings: in the anisotropic case on the plane and in the isotropic case in any dimension.
\end{abstract}

\maketitle

\section{Introduction}

Our main focus is the least gradient problem, which is the following minimisation problem

\begin{equation}\label{problem}\tag{LGP}
\min \{ \int_\Omega |Du|, \quad u \in BV(\Omega), \quad u|_{\partial\Omega} = f  \}.
\end{equation}
This problem was introduced in \cite{SWZ}, where the authors estabilish that for continuous boundary data, under a set of conditions on an open bounded set $\Omega \subset \mathbb{R}^N$ slightly weaker than strict convexity, a unique solution to Problem (\ref{problem}) exists and it is continuous up to the boundary. Recently, the authors of \cite{JMN} considered an anisotropic version of the least gradient problem:
\begin{equation}\label{aproblem}\tag{ALGP}
\min \{ \int_\Omega \phi(x, Du), \quad u \in BV(\Omega), \quad u|_{\partial\Omega} = f  \}.
\end{equation}
This type of problems appear as a dimensional reduction in the free material design, see \cite{GRS}, and conductivity imaging, see \cite{JMN}. 
In this paper, we follow the approach to this problem from the point of view of geometric measure theory, following \cite{BGG}, \cite{JMN}, and \cite{SWZ}. In particular, we understand the boundary condition in the sense of traces of $BV$ functions.

In both the isotropic and anisotropic least gradient problem, existence of solutions depends on the shape of $\Omega$. In particular, for continuous boundary data sufficient conditions are: for Problem \eqref{problem} strict convexity of $\Omega$, see \cite{SWZ}; for Problem \eqref{aproblem}, the barrier condition introduced in \cite{JMN}. As continuous boundary data are bounded, by a maximum principle we obtain an immediate $L^\infty$ bound on the solution. However, under suitable regularity assumptions on $\phi$, a recent article \cite{Gor4} gives existence of solutions to Problem \eqref{aproblem} also for unbounded boundary data, provided that their discontinuity set has Hausdorff measure zero. In this case, the direct method gives no regularity estimates for the solutions.

The paper is organised as follows: Section 2 provides the necessary background concerning anisotropic least gradient functions. Section 3 is devoted to proving the main result of this paper, i.e. Theorem \ref{thm:lpregularity}, which concerns $L^{\frac{Np}{N-1}}$ regularity of solutions to the least gradient problem for boundary data which lie in $L^p(\partial\Omega)$, using an argument based on the isoperimetric inequality. Moreover, in Example \ref{ex:optimal} we see that the exponent $\frac{Np}{N-1}$ is optimal. 

Let us stress that we do not discuss existence or uniqueness of minimisers; here, given a minimiser of Problem \eqref{aproblem}, we prove an estimate of its $L^{\frac{Np}{N-1}}$ norm. For this reason, we only assume $\Omega$ to be an open bounded set with Lipschitz boundary; we do not impose geometric assumptions on $\Omega$ sufficient to obtain existence of minimisers. However, we have an indirect assumption that the set $\Omega$ and the function $f$ support at least one solution to the anisotropic least gradient problem. 

In Section 4 we prove that solutions to the least gradient problem are locally bounded. This is done in two settings: firstly, in $\mathbb{R}^2$ in the anisotropic case, using a characterisation of one-dimensional integral currents; secondly, in the isotropic least gradient problem in any dimension, using the monotonicity formula for area-minimising boundaries.

\section{Preliminaries}

\subsection{Least gradient functions}

In this section, we recall the definition of least gradient functions on bounded domains and their basic properties.

\begin{definition}\label{dfn:lg}
Let $\Omega \subset \mathbb{R}^N$ be an open and bounded set. We say that $u \in BV(\Omega)$ is a function of least gradient, if for every $v \in BV(\Omega)$ compactly supported in $\Omega$ we have
\begin{equation*}
\int_\Omega |Du| \leq \int_\Omega |D(u + v)|.
\end{equation*}
In case when $\Omega$ has Lipschitz boundary, this is equivalent to the condition that $v \in BV_0(\Omega)$, see \cite[Theorem 2.2]{SZ0}. This equivalence is proved using an approximation with functions of the form $v_n = v \chi_{\Omega_n}$ for suitably chosen $\Omega_n$.
\end{definition}

\begin{definition}
We say that $u \in BV(\Omega)$ is a solution to Problem (\ref{problem}), if $u$ is a function of least gradient and the trace of $u$ equals $f$, i.e. for $\mathcal{H}^{N-1}-$almost every $x \in \partial\Omega$ we have 
$$ {\lim_{r \rightarrow 0^+} \,} \dashint_{B(x,r) \cap \Omega} |f(x) - u(y)| dy = 0.$$
\end{definition}

Now, we recall a classical theorem by Bombieri-de Giorgi-Giusti, which gives us a link between the function $u$ of least gradient and the regularity of its superlevel sets. Here and in the whole manuscript let us denote $E_t = \{ u \geq t \}$.

\begin{theorem}\label{tw:bgg}
(\cite[Theorem 1]{BGG}) Suppose that $\Omega \subset \mathbb{R}^N$ is open and let $u \in BV(\Omega)$ be a function of least gradient in $\Omega$. Then for every $t \in \mathbb{R}$ the set $E_t$ is minimal in $\Omega$, i.e. the function $\chi_{E_t}$ is of least gradient. \qed
\end{theorem}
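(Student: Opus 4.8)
The plan is to prove the (formally stronger, but equivalent) assertion that for each $t$ the set $E_t$ minimises perimeter against every competitor that differs from it only on a compactly contained subset of $\Omega$; this is precisely the statement that $\chi_{E_t}$ is of least gradient. I would argue by contradiction: fix $t \in \mathbb{R}$ and suppose $E_t$ is not minimal, so there is a set $F$ with $F \triangle E_t$ compactly contained in $\Omega$ and $P(F;\Omega) < P(E_t;\Omega)$. By the submodularity of the perimeter, $P(F \cap E_t;\Omega) + P(F \cup E_t;\Omega) \le P(F;\Omega) + P(E_t;\Omega) < 2P(E_t;\Omega)$, so after replacing $F$ by $F \cap E_t$ or $F \cup E_t$ we may assume that $F \subseteq E_t$ (the reverse inclusion being symmetric); in particular the symmetric difference is $E_t \setminus F$.

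From such an $F$ I would build an admissible competitor for $u$ by splicing at the level $t$. Setting $w = t$ on $E_t \setminus F$ and $w = u$ elsewhere, the first thing to verify is that $w = u$ outside the compact set $K = \overline{E_t \setminus F}$, so that $w - u$ is a compactly supported $BV$ perturbation against which the least gradient property of $u$ may legitimately be tested. Next I would identify the superlevel sets of $w$: using the monotonicity of $\{u \ge s\}$ together with $F \subseteq E_t$, one checks that $\{w \ge s\} = E_s$ for $s \le t$ while $\{w \ge s\} = E_s \cap F$ for $s > t$. Applying the coarea formula to both $u$ and $w$ then reduces the comparison of total variations to a comparison of integrated perimeters, namely
\[
\int_\Omega |Dw| - \int_\Omega |Du| = \int_t^\infty \big( P(E_s \cap F;\Omega) - P(E_s;\Omega) \big)\, ds .
\]

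I expect this last integral — showing it is strictly negative — to be the main obstacle, since the naive slicewise submodularity bound is too lossy (it produces an unintegrable constant term) and the genuine cancellation is more delicate. The cleanest route is to first establish minimality for almost every $t$: if the bad levels formed a set of positive measure, each carrying a strictly better competitor, one assembles these into a single nested family of admissible competitors (using submodularity to enforce monotonicity while keeping the integrated perimeter strictly below $\int_\Omega |Du|$), contradicting minimality through the coarea identity. Finally, to upgrade from almost every $t$ to every $t$ — the remaining at most countably many exceptional values being those where $\{u > t\}$ and $\{u \ge t\}$ differ by more than a null set — I would approximate $E_t$ by $E_{t'}$ along a sequence $t' \to t$ of good levels and invoke the closure theorem for perimeter minimisers under $L^1_{\mathrm{loc}}$ convergence, so that minimality passes to the limit and $\chi_{E_t}$ is of least gradient for every $t$.
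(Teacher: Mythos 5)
The paper does not actually prove this statement---it is quoted as \cite[Theorem 1]{BGG} with a \verb|\qed|---so your attempt must be measured against the classical Bombieri--De Giorgi--Giusti/Miranda argument. Your preliminary steps are sound: the reduction to $F \subseteq E_t$ via submodularity, the spliced competitor $w$, the identification of its superlevel sets, and the coarea identity are all correct, and you correctly diagnose that the slicewise comparison cannot be closed. The genuine gap is the step you describe as ``one assembles these into a single nested family of admissible competitors.'' For a positive-measure set $T$ of bad levels you would need (i) a choice $t \mapsto F_t$ of strictly better competitors that is \emph{measurable} in $t$, without which $\int_T P(F_t;\Omega)\,dt$ is not even defined; (ii) nestedness enforced \emph{simultaneously} across all of $T$: the union/intersection surgery you allude to repairs two levels at a time, and iterating it over uncountably many levels forces a countable exhaustion and monotone $L^1$ limits, under which perimeter is only lower semicontinuous---which works against you, since you must preserve a strict inequality in the limit; and (iii) uniform compact containment of $F_t \triangle E_t$ together with a uniform quantum of improvement. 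None of this is routine; it is precisely the difficulty that makes the direct coarea route unworkable, and as written your proposal assumes the conclusion of its hardest step.

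The irony is that the tool you reserve for the final upgrade from almost every $t$ to every $t$---Miranda's closure theorem, that an $L^1_{loc}$ limit of least gradient functions is of least gradient---is strong enough to carry the entire proof, and this is exactly how \cite{BGG} argue. First, truncations preserve the least gradient property: by the coarea formula,
\begin{equation*}
\int_\Omega |Du| = \int_\Omega |D\min(u,c)| + \int_\Omega |D(u-c)^+|,
\end{equation*}
and testing the least gradient property of $u$ with $u+v=\min(u,c)+v+(u-c)^+$ (for $v$ compactly supported) yields, after subtracting the finite term $\int_\Omega |D(u-c)^+|$, that $\min(u,c)$ is of least gradient; the same holds for $\max(u,c)$ and for positive affine images. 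Consequently $v_\varepsilon := \varepsilon^{-1}\bigl(\min(\max(u,t-\varepsilon),t)-(t-\varepsilon)\bigr)$ is of least gradient for every $\varepsilon>0$, takes values in $[0,1]$, and converges pointwise, hence in $L^1_{loc}(\Omega)$, to $\chi_{\{u \geq t\}}$ as $\varepsilon \downarrow 0$. Miranda's theorem then gives that $\chi_{E_t}$ is of least gradient for \emph{every} $t$ (with $E_t=\{u\geq t\}$, as the paper requires), with no almost-everywhere statement and no assembly of competitors at all. I would recommend you abandon the assembly step and rewrite the proof along these lines.
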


Finally, as least gradient functions are $BV$ functions, they are defined up to a set of measure zero, we have to choose a proper representative if we want to state any pointwise regularity results. In this paper we deal with $L^p$ regularity, so at first glance it is not an issue; however, in the proofs in Section 4 we will use regularity of boundaries of area-minimising sets, so following \cite{SWZ} we employ the convention that a set of a bounded perimeter consists of all its points of positive density.

\subsection{Anisotropic formulation}

Firstly, we recall the notion of a metric integrand and $BV$ spaces with respect to a metric integrand. This entire subsection is based on the construction in \cite{AB}.

\begin{definition}
Let $\Omega \subset \mathbb{R}^N$ be an open bounded set with Lipschitz boundary. A continuous function $\phi: \overline{\Omega} \times \mathbb{R}^N \rightarrow [0, \infty)$ is called a metric integrand, if it satisfies the following conditions: \\
\\
$(1)$ $\phi$ is convex with respect to the second variable for a.e. $x \in \overline{\Omega}$; \\
$(2)$ $\phi$ is {1-}homogeneous with respect to the second variable, i.e.
\begin{equation*}
\forall \, x \in \overline{\Omega}, \quad \forall \, \xi \in \mathbb{R}^N, \quad \forall \, t \in \mathbb{R} \quad \phi(x, t \xi) = |t| \phi(x, \xi);
\end{equation*}
$(3)$ $\phi$ is bounded and {uniformly} elliptic in $\overline{\Omega}$, i.e.
\begin{equation*}
\exists \,  \lambda, \Lambda  > 0 \quad \forall \, x \in \overline{\Omega}, \quad \forall \, \xi \in \mathbb{R}^N \quad \lambda |\xi| \leq \phi(x, \xi) \leq \Lambda |\xi|.
\end{equation*}
These conditions apply to most cases considered in the literature, such as the classical least gradient problem, i.e. $\phi(x, \xi) = |\xi|$ (see \cite{SWZ}), the weighted least gradient problem, i.e. $\phi(x, \xi) = g(x) |\xi|$ (see \cite{JMN}), where $g \geq c > 0$, and $l_p$ norms for $p \in [1, \infty]$, i.e. $\phi(x, \xi) = \| \xi \|_p$ (see \cite{Gor1}).
\end{definition}

\begin{definition}
The polar function of $\phi$ is $\phi^0: \overline{\Omega} \times \mathbb{R}^N \rightarrow [0, \infty)$ defined as
\begin{equation*}
\phi^0 (x, \xi^*) = \sup \, \{ \langle \xi^*, \xi \rangle : \, \xi \in \mathbb{R}^N, \, \phi(x, \xi) \leq 1 \}.
\end{equation*} 
\end{definition}

\begin{definition}
Let $\phi$ be a {continuous metric integrand in} $\overline{\Omega}$. For a given function $u \in L^1(\Omega)$ we define its $\phi-$total variation in $\Omega$ by the formula:

\begin{equation*}
\int_\Omega |Du|_\phi = \sup \, \{ \int_\Omega u \, \mathrm{div} \, \mathbf{z} \, dx : \, \phi^0(x,\mathbf{z}(x)) \leq 1 \, \, \, \text{a.e.}, \quad \mathbf{z} \in C_c^1(\Omega)  \}.
\end{equation*}
Another popular notation for the $\phi-$total variation is $\int_\Omega \phi(x, Du)$. We will say that $u \in BV_\phi(\Omega)$ if its $\phi-$total variation is finite {in $\overline{\Omega}$}; furthermore, let us define the $\phi-$perimeter of a set $E$ as
$$P_\phi(E, \Omega) = \int_{\Omega} |D\chi_E|_\phi.$$
If $P_\phi(E, \Omega) < \infty$, we say that $E$ is a set of bounded $\phi-$perimeter in $\Omega$.
\end{definition}

\begin{remark}
By property $(3)$ of a metric integrand
$$\lambda \int_\Omega |Du| \leq \int_\Omega |Du|_\phi \leq \Lambda \int_\Omega |Du|.$$
In particular, $BV_\phi(\Omega) = BV(\Omega)$ as sets; however, they are equipped with different (but equivalent) norms. Moreover, the $\phi-$total variation admits the following integral representation:
\begin{equation*}
\int_\Omega |Du|_\phi = \int_\Omega \phi(x, \nu^u(x)) \, |Du|,
\end{equation*}
where $\nu^u$ is the Radon-Nikodym derivative $\nu^u = \frac{d Du}{d |Du|}$. If we take $u$ to be a characteristic function of a set $E$ with a $C^1$ boundary, we have
\begin{equation*}
P_\phi(E, \Omega) = {\int_{\partial E \cap \Omega}} \phi(x, \nu_E) \, d \mathcal{H}^{N-1},
\end{equation*}
where $\nu_E(x)$ is the (Euclidean) unit vector normal to $\partial E$ at $x \in \partial E$. {For the isotropic version of these facts, see \cite{AFP} or \cite{EG}; for the exposition of BV theory in the anisotropic setting and the integral representation formula, see \cite{AB}.}
\end{remark}

\subsection{$\phi-$least gradient functions}

Now, we turn our attention to the precise formulation of Problem (\ref{aproblem}). Then we recall several known properties of the minimisers.

\begin{definition}
Let $\Omega \subset \mathbb{R}^N$ be an open bounded set with Lipschitz boundary. We say that $u \in BV_\phi(\Omega)$ is a function of $\phi-$least gradient, if for every compactly supported $v \in BV_\phi(\Omega)$ we have

\begin{equation*}
\int_\Omega |Du|_\phi \leq \int_\Omega |D(u + v)|_\phi.
\end{equation*}
We say that $u$ is a solution to Problem (\ref{aproblem}), the anisotropic least gradient problem with boundary data $f$, if $u$ is a function of $\phi-$least gradient and $Tu = f$.
\end{definition}

We will recall a few properties of functions of $\phi-$least gradient. Firstly, we state an anisotropic version of Theorem \ref{tw:bgg}. Its proof in both directions is based on the the co-area formula.

\begin{theorem}\label{thm:anisobgg}
(\cite[Theorem 3.19]{Maz}) Let $\Omega \subset \mathbb{R}^N$ be an open bounded set with Lipschitz boundary. Assume that the metric integrand $\phi$ has a continuous extension to $\mathbb{R}^N$. Take $u \in BV_\phi(\Omega)$. Then $u$ is a function of $\phi-$least gradient in $\Omega$ if and only if $\chi_{\{ u > t \}}$ is a function of $\phi-$least gradient {in $\Omega$} for almost all $t \in \mathbb{R}$. \qed
\end{theorem}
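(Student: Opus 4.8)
The plan is to reduce both implications to the anisotropic coarea formula
$$\int_\Omega |Du|_\phi = \int_{-\infty}^\infty P_\phi(\{u > t\}, \Omega) \, dt,$$
which I would first establish from the integral representation recalled in the Remark above. Combining $\int_\Omega |Du|_\phi = \int_\Omega \phi(x, \nu^u) \, |Du|$ with the classical coarea formula $|Du| = \int_{-\infty}^\infty |D\chi_{\{u>t\}}| \, dt$, and using that the measure-theoretic normals satisfy $\nu^u = \nu_{\{u>t\}}$ for $|D\chi_{\{u>t\}}|$-almost every point and almost every $t$ (so that $\phi(x, \nu^u)$ may be replaced by $\phi(x, \nu_{\{u>t\}})$ under the integral sign), an application of Fubini's theorem yields the displayed identity; the continuous extension of $\phi$ to $\mathbb{R}^N$ ensures that $\phi(x, \cdot)$ is defined at all relevant directions.

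Granting the coarea formula, the implication that minimality of all level sets forces $u$ to be of $\phi$-least gradient is the easy one. Given a compactly supported $v \in BV_\phi(\Omega)$, the functions $u$ and $u+v$ agree outside $\mathrm{supp}\, v$, so for each $t$ the set $\{u+v>t\}$ differs from $\{u>t\}$ only inside a fixed compact subset of $\Omega$; hence $\chi_{\{u+v>t\}} - \chi_{\{u>t\}}$ is an admissible compactly supported competitor for $\chi_{\{u>t\}}$. Minimality then gives $P_\phi(\{u>t\}, \Omega) \le P_\phi(\{u+v>t\}, \Omega)$ for almost every $t$, and integrating in $t$ while applying coarea to both sides yields $\int_\Omega |Du|_\phi \le \int_\Omega |D(u+v)|_\phi$.

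I expect the reverse implication to be the main obstacle. Arguing by contradiction, suppose $u$ is of $\phi$-least gradient yet the set $T$ of levels $t$ for which $\{u>t\}$ fails to be $\phi$-minimal has positive measure. For each such $t$ one can pick an improving competitor $E_t'$ agreeing with $\{u>t\}$ outside a fixed compact subset of $\Omega$ and with $P_\phi(E_t', \Omega) < P_\phi(\{u>t\}, \Omega)$. The difficulty is to perform this selection so that it is simultaneously (i) measurable in $t$, (ii) supported in one fixed compact set, and (iii) monotone, meaning $E_s' \subseteq E_t'$ whenever $s \ge t$, so that the family $\{E_t'\}$ is the collection of superlevel sets of a genuine function $\tilde u$. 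Monotonicity can be restored through the submodularity of the anisotropic perimeter,
$$P_\phi(A \cup B, \Omega) + P_\phi(A \cap B, \Omega) \le P_\phi(A, \Omega) + P_\phi(B, \Omega),$$
which allows a non-nested pair of improving competitors to be replaced by their union and intersection without increasing the total energy. Reconstructing $\tilde u$ from $\{E_t'\}$ via the layer-cake formula, so that $\tilde u - u$ is compactly supported, and applying the coarea formula once more gives $\int_\Omega |D\tilde u|_\phi < \int_\Omega |Du|_\phi$, which contradicts the $\phi$-least gradient property of $u$. The technical core of the proof is therefore this simultaneous measurable and monotone selection of competitors, exactly as in the classical Bombieri--De Giorgi--Giusti scheme.
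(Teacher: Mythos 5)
Your statement is one the paper does not prove at all: it is quoted verbatim from \cite[Theorem 3.19]{Maz}, with only the remark that ``its proof in both directions is based on the co-area formula.'' So the comparison is with that classical argument. Your first two paragraphs are essentially right and do match it: the anisotropic coarea formula $\int_\Omega |Du|_\phi = \int_{-\infty}^{\infty} P_\phi(\{u>t\},\Omega)\,dt$ holds and is proved the way you indicate, and the implication ``a.e.\ level set $\phi$-minimal $\Rightarrow$ $u$ of $\phi$-least gradient'' follows exactly by your observation that $\{u+v>t\}$ is a compact perturbation of $\{u>t\}$, followed by integration in $t$.

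The converse, however, is where your proposal has a genuine gap, and you half-admit it: the ``simultaneous measurable and monotone selection'' of improving competitors is declared to be the technical core and then never constructed. Two concrete problems. First, the monotonization you propose --- replacing a non-nested pair $E_s', E_t'$ by their union and intersection via submodularity --- is a pairwise operation; there is no well-defined way to iterate it over a continuum of levels, so as written the construction of the family $\{E_t'\}$ does not exist. Second, the appeal to ``the classical Bombieri--De Giorgi--Giusti scheme'' is a mischaracterization: BGG never perform any such selection. Their proof of this direction is entirely different: the truncations $u_{t,\epsilon}=\epsilon^{-1}\bigl(\min(u,t+\epsilon)-\min(u,t)\bigr)$ are of least gradient for every $t$ and $\epsilon$ (constants are least gradient, and $\max/\min$ of two least gradient functions are least gradient --- the lattice property, itself proved via submodularity); since $u_{t,\epsilon}\to\chi_{\{u>t\}}$ in $L^1$ as $\epsilon\to 0^+$, a Miranda-type stability theorem (an $L^1_{loc}$ limit of least gradient functions is of least gradient) gives minimality of $\{u>t\}$ with no contradiction argument, no selection, and no measurability issues. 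The anisotropic versions of the lattice property and of Miranda's stability theorem are exactly what must be checked, and where the continuity and ellipticity of $\phi$ enter.

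If you insist on your contradiction scheme, it can be repaired, but not along the line you emphasize: after a countable decomposition one can assume the bad levels form a bounded set of positive measure with competitors supported in one compact set and improving by a fixed amount, and then one needs a genuine measurable selection theorem (von Neumann/Aumann) for the multifunction $t\mapsto\{\text{improving competitors for } E_t\}$. Monotonicity, on the other hand, is not needed at all: for any measurable (not necessarily nested) family with $E_t'=E_t$ for good $t$, the layer-cake function $\tilde u$ satisfies $\int_\Omega \tilde u\, \mathrm{div}\,\mathbf{z}\,dx=\int_{\mathbb{R}}\int_\Omega \chi_{E_t'}\,\mathrm{div}\,\mathbf{z}\,dx\,dt \le \int_{\mathbb{R}} P_\phi(E_t',\Omega)\,dt$ for every admissible field $\mathbf{z}$, whence $\int_\Omega |D\tilde u|_\phi \le \int_{\mathbb{R}} P_\phi(E_t',\Omega)\,dt$ directly from the dual definition of the $\phi$-total variation. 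So your submodularity step addresses a non-problem while the actual obstruction (measurability of the selection) is left open; the truncation-plus-stability route avoids all of this.
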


Finally, we recall the following observation:

\begin{lemma}\label{lem:variationestimate}
(\cite[Lemma 2.16]{Gor4}) Let $\Omega \subset \mathbb{R}^N$ be an open bounded set with Lipschitz boundary. Let $u \in BV_\phi(\Omega)$ be a function of $\phi-$least gradient in $\Omega$. Then
$$ \int_\Omega |Du|_\phi \leq \int_{\partial\Omega} {\phi(x, \nu^\Omega)} |Tu| \, d\mathcal{H}^{N-1},$$
{where $\nu^\Omega$ is the $\mathcal{H}^{N-1}$-a.e. well-defined outer normal to $\partial\Omega$ at $x$.} \qed
\end{lemma}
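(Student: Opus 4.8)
The plan is to exploit the minimality of $u$ against a competitor that is forced to carry the same boundary trace but whose $\phi$-total variation is concentrated in a shrinking collar near $\partial\Omega$. For $\delta > 0$ write $U_\delta = \{x \in \Omega : \mathrm{dist}(x, \partial\Omega) < \delta\}$ and $\Omega_\delta = \Omega \setminus \overline{U_\delta}$, and set $w_\delta = u\, \chi_{U_\delta}$, i.e. $w_\delta$ equals $u$ in the collar and vanishes in the interior. First I would check that $w_\delta$ is an admissible competitor: it agrees with $u$ in a neighbourhood of $\partial\Omega$, so $T w_\delta = Tu = f$, and $w_\delta - u = -u\,\chi_{\Omega_\delta}$ is supported in the compact set $\overline{\Omega_\delta} \subset \Omega$. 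Since the trace of a $BV$ function on a Lipschitz hypersurface is integrable, for almost every $\delta$ the function $w_\delta$ lies in $BV_\phi(\Omega)$ even when $u$ is unbounded, and $w_\delta - u$ is an admissible compactly supported perturbation. Hence the $\phi$-least gradient property yields $\int_\Omega |Du|_\phi \le \int_\Omega |Dw_\delta|_\phi$.

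Next I would compute the right-hand side. For almost every $\delta$ the level set $\Gamma_\delta = \Omega \cap \partial\Omega_\delta$ is a Lipschitz hypersurface with $|Du|(\Gamma_\delta) = 0$, and the only new contribution to the variation of $w_\delta$ comes from the jump created on $\Gamma_\delta$ by switching from $u$ to $0$. Using the integral representation of the anisotropic total variation and its behaviour on jump sets, this gives
$$\int_\Omega |Dw_\delta|_\phi = \int_{U_\delta} |Du|_\phi + \int_{\Gamma_\delta} \phi(x, \nu_{\Gamma_\delta}) \, |Tu| \, d\mathcal{H}^{N-1},$$
where $Tu$ on $\Gamma_\delta$ denotes the trace of $u$ taken from the collar side and $\nu_{\Gamma_\delta}$ is a unit normal to $\Gamma_\delta$. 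Because $|Du|_\phi$ is a finite measure and $U_\delta \downarrow \emptyset$ as $\delta \to 0^+$, the first term tends to $0$ by continuity from above, so the whole variation must be carried by the boundary-layer term.

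It then remains to pass to the limit in the boundary-layer term, and this is the technical heart of the argument. I would show
$$\lim_{\delta \to 0^+} \int_{\Gamma_\delta} \phi(x, \nu_{\Gamma_\delta}) \, |Tu| \, d\mathcal{H}^{N-1} = \int_{\partial\Omega} \phi(x, \nu^\Omega) \, |Tu| \, d\mathcal{H}^{N-1}.$$
Working locally and flattening $\partial\Omega$ by a bi-Lipschitz chart, the surfaces $\Gamma_\delta$ become graphs over a fixed piece of boundary translated inward, so the traces of $u$ on $\Gamma_\delta$ converge in $L^1(\mathcal{H}^{N-1})$ to $f$ by the standard interior-trace approximation for $BV$ functions, while the normals to $\Gamma_\delta$ converge, up to orientation, to $\nu^\Omega$ at $\mathcal{H}^{N-1}$-almost every boundary point where $\partial\Omega$ is differentiable. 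Since $\phi$ is continuous, bounded, and even in its second argument, one has $\phi(x, \nu_{\Gamma_\delta}) \to \phi(x, \nu^\Omega)$ there, and these facts combine to give convergence of the weighted integral. Combining the three displays and letting $\delta \to 0^+$ yields $\int_\Omega |Du|_\phi \le \int_{\partial\Omega} \phi(x, \nu^\Omega) |Tu| \, d\mathcal{H}^{N-1}$, as claimed. The main obstacle is precisely this last convergence: matching the geometry of the approximating surfaces $\Gamma_\delta$ to $\partial\Omega$, so that the normals and the traces converge simultaneously, is delicate for merely Lipschitz domains, and is the only place where the boundary regularity of $\Omega$ and the continuity of $\phi$ are genuinely used.
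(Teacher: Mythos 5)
First, a framing remark: the paper does not prove this lemma at all --- it is quoted directly from \cite[Lemma 2.16]{Gor4} (note the \qed in the statement), so your argument can only be measured against the standard proofs in the literature, not against anything in this paper. Your overall strategy is one of those standard routes and is sound in outline: the collar competitor $w_\delta = u\chi_{U_\delta}$ differs from $u$ by a function compactly supported in $\Omega$, so minimality applies; the splitting of $\int_\Omega |Dw_\delta|_\phi$ into $\int_{U_\delta}|Du|_\phi$ plus the jump term on $\Gamma_\delta$ is valid for a.e.\ $\delta$ (one needs $|Du|(\Gamma_\delta)=0$ and integrability of the interior trace on $\Gamma_\delta$, both true for a.e.\ $\delta$ by coarea-type arguments); and $\int_{U_\delta}|Du|_\phi \to 0$ by continuity from above of the finite measure $|Du|_\phi$.

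The genuine gap is the final limit, and it is not merely ``delicate'' --- the justification you sketch is incorrect as stated. A bi-Lipschitz flattening of $\partial\Omega$ does \emph{not} send the distance level sets $\Gamma_\delta = \{x:\mathrm{dist}(x,\partial\Omega)=\delta\}$ to inward translates of a fixed graph; in a merely Lipschitz domain these level sets need not be Lipschitz graphs at all, and the interior-trace approximation theorem for $BV$ functions is formulated for translated graph surfaces in local coordinates, so it cannot be invoked for your $\Gamma_\delta$. Moreover, even granting a.e.\ convergence of normals and $L^1$ convergence of traces, you are integrating over a family of \emph{different} surfaces: to pass to the limit you need parametrizations $\Psi_\delta : \partial\Omega \to \Gamma_\delta$ with Jacobians converging to $1$ a.e.\ together with a dominated convergence argument, none of which is supplied, and for distance level sets in Lipschitz domains such maps (e.g.\ nearest-point projections) are not well behaved. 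Two ways to repair this. (i) Observe that you only need a one-sided bound, $\liminf_{\delta\to 0^+}\int_{\Gamma_\delta}\phi(x,\nu_{\Gamma_\delta})|u|\,d\mathcal{H}^{N-1}\le\int_{\partial\Omega}\phi(x,\nu^\Omega)|Tu|\,d\mathcal{H}^{N-1}$; averaging in $\delta$ via the coarea formula for the distance function $d$ turns the surface integrals into $\frac{1}{\delta_0}\int_{U_{\delta_0}}\phi(x,\nabla d)|u|\,dx$, and one must then prove that $\nabla d$ converges suitably to $\nu^\Omega$ and that $\frac{1}{\delta_0}\int_{U_{\delta_0}}|u-Tu\circ\pi|\,dx\to 0$ --- true, but this is real work and is the entire content of the lemma. (ii) Cleaner: by the equivalence recalled in Definition 2.1 (for Lipschitz $\Omega$, competitors may be taken with zero trace rather than compact support), it suffices to produce, for each $\epsilon>0$, a function $g_\epsilon\in W^{1,1}(\Omega)$ with $Tg_\epsilon=Tu$ and $\int_\Omega\phi(x,\nabla g_\epsilon)\,dx\le\int_{\partial\Omega}\phi(x,\nu^\Omega)|Tu|\,d\mathcal{H}^{N-1}+\epsilon$; this is the sharp-constant (anisotropic) Gagliardo extension theorem, and minimality against $g_\epsilon$ then gives the claim in one line. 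In either form, the step you left as ``the technical heart'' must be proved, not asserted: as it stands your argument establishes only the weaker bound with $\phi(x,\nu^\Omega)$ replaced by the constant $\Lambda$.
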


\subsection{Monotonicity formula}

We will also use the monotonicity formula for stationary varifolds; we refer to \cite[\S 17]{Sim} for the full statement. Here, we will only use it in codimension one for area-minimising boundaries. In particular, we know that if $E$ is a minimal set in $\Omega$ (i.e. $\chi_E$ is a function of least gradient), then $\partial E$ is regular except for a set of Hausdorff dimension $N - 7$, hence $P(E, \Omega) =  \mathcal{H}^{N-1}(\partial E)$.

\begin{proposition}\label{prop:monotonicity}
Let $E \subset \Omega$ be a set of finite perimeter such that $\partial E$ is locally area-minimising. Then, for each $x \in \Omega$ and $r < \text{dist}(x, \partial \Omega)$, the function 
$$ f(x,r) = \frac{\mathcal{H}^{N-1}(\partial E \cap B(x,r))}{\omega_{N-1} r^{N-1}}$$
is increasing in $r$. In particular, the limit density
$$ \Theta(D\chi_E, x) = \lim_{r \rightarrow 0^+} f(x,r)$$
exists and equals at least one at {\it each} point of the support of $D\chi_E$. \qed
\end{proposition}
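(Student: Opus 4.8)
The plan is to establish the monotonicity formula in Proposition \ref{prop:monotonicity} by following the classical approach for stationary varifolds specialized to area-minimising boundaries in codimension one, as outlined in \cite[\S 17]{Sim}. First I would recall that since $\partial E$ is locally area-minimising, its reduced boundary $\partial^* E$ is a smooth hypersurface away from a closed singular set of Hausdorff dimension at most $N-7$, and $\mathcal{H}^{N-1}(\partial E) = P(E,\Omega)$. The central object is the rescaled density ratio $f(x,r) = \mathcal{H}^{N-1}(\partial E \cap B(x,r)) / (\omega_{N-1} r^{N-1})$. Without loss of generality I would fix $x$ and work relative to that point, writing the distance function $\rho(y) = |y - x|$.

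The key computation is a first-variation identity. I would differentiate the numerator $A(r) = \mathcal{H}^{N-1}(\partial E \cap B(x,r))$ with respect to $r$ and compare its growth to the purely dimensional factor $r^{N-1}$. Because $\partial E$ is area-minimising, the first variation of area vanishes for compactly supported normal deformations; choosing the radial vector field $y \mapsto (y-x)$ as a test field and applying the tangential divergence theorem on the hypersurface, one obtains the monotonicity inequality in differential form, namely
\begin{equation*}
\frac{d}{dr}\left( \frac{A(r)}{\omega_{N-1} r^{N-1}} \right) = \frac{d}{dr} \int_{\partial E \cap B(x,r)} \frac{|(y-x)^\perp|^2}{|y-x|^{N+1}} \, d\mathcal{H}^{N-1}(y) \geq 0,
\end{equation*}
where $(y-x)^\perp$ denotes the component of the radial vector normal to the tangent space of $\partial E$. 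The nonnegativity of the right-hand side is immediate, which gives monotonicity of $f(x,r)$ in $r$. Since $f(x,\cdot)$ is monotone and bounded below by zero, the limit $\Theta(D\chi_E, x) = \lim_{r\to 0^+} f(x,r)$ exists.

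To obtain the lower bound $\Theta(D\chi_E,x) \geq 1$ at each point of the support of $D\chi_E$, I would argue that at any such point the density ratio cannot decay to zero: if $x$ lies in $\operatorname{supp} D\chi_E$ then every ball $B(x,r)$ meets $\partial E$ in positive measure, and a standard lower density estimate for area-minimising boundaries (obtained by comparing $E$ with the competitor formed by filling in $E \cap B(x,r)$ or cutting it out, and using the isoperimetric inequality) forces $A(r) \geq c\, r^{N-1}$ for a dimensional constant. Combined with monotonicity, the limit density is therefore at least the value obtained on the tangent cone, which for a minimal boundary is a hyperplane (or minimal cone) of density exactly one; at regular points this is the flat tangent plane, and the density equals $1$, while at singular points upper semicontinuity of density together with monotonicity keeps $\Theta \geq 1$.

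The main obstacle I anticipate is handling the singular set $\operatorname{Sing}(\partial E)$ of dimension $N-7$: the clean first-variation identity is derived on the smooth part $\partial^* E$, so I must ensure that the singular set does not contribute to the area or to the boundary terms when integrating by parts. This is resolved by the fact that $\mathcal{H}^{N-1}(\operatorname{Sing}(\partial E)) = 0$, so the singular set is $\mathcal{H}^{N-1}$-negligible and the varifold first-variation formula extends across it; the monotonicity then holds for the full density ratio and the limit density is well defined and at least one even at singular points, where the tangent cone is a nonplanar area-minimising cone of density strictly greater than one. Since all of these facts are standard and available in \cite{Sim}, the proof reduces to assembling them, and I would simply cite the monotonicity formula there rather than reproduce the first-variation calculation in full.
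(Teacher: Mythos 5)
Your proposal is correct and assembles exactly the standard ingredients: the first-variation/monotonicity computation with the radial test field and the density lower bound via tangent cones and upper semicontinuity, all of which are in \cite[\S 17]{Sim}. This matches the paper's approach, since the paper gives no proof of Proposition \ref{prop:monotonicity} at all — it is quoted as a known result with the same citation to \cite{Sim} that you ultimately defer to.
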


\subsection{Traces of characteristic functions}

The following Lemma is an easy exercise in traces of $BV$ functions, but to the best of the author's knowledge there is no proof in the literature.

\begin{lemma}\label{lem:traceforalmostallt}
Let $u \in BV(\Omega)$ and $Tu = f$. For all except countably many $t \in \mathbb{R}$ we have
$$ T\chi_{\{ u \geq t \}} = \chi_{\{ f \geq t \}}.$$
\end{lemma}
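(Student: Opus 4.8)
The plan is to compute the trace $T\chi_{E_t}$ directly from its realisation as a Lebesgue value and to compare it with $\chi_{\{f \geq t\}}$ at $\mathcal{H}^{N-1}$-almost every boundary point. On a bounded Lipschitz domain the trace of a $BV$ function is given $\mathcal{H}^{N-1}$-a.e. by a limit of averages, so for $\mathcal{H}^{N-1}$-a.e. $x_0 \in \partial\Omega$ one has
\begin{equation*}
\lim_{r \to 0^+} \dashint_{B(x_0,r)\cap\Omega} |u(y) - f(x_0)|\,dy = 0;
\end{equation*}
I will call such an $x_0$ a \emph{good point}, and all but an $\mathcal{H}^{N-1}$-null set of boundary points are good. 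It then suffices to evaluate the density limit $\lim_{r\to 0^+}\dashint_{B(x_0,r)\cap\Omega}\chi_{E_t}(y)\,dy$ at good points, since this limit, where it exists, is exactly the Lebesgue value of $\chi_{E_t}$ and hence its trace.

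Next I would fix a good point $x_0$, write $c = f(x_0)$, and estimate the density of $E_t$ at $x_0$ by a Chebyshev argument. If $t < c$, then $\{u < t\} \subseteq \{|u - c| > c - t\}$, so
\begin{equation*}
\frac{|\{y \in B(x_0,r)\cap\Omega : u(y) < t\}|}{|B(x_0,r)\cap\Omega|} \leq \frac{1}{c-t}\dashint_{B(x_0,r)\cap\Omega}|u(y)-c|\,dy \longrightarrow 0
\end{equation*}
as $r \to 0^+$, whence the density of $E_t = \{u \geq t\}$ at $x_0$ equals $1$, i.e. $T\chi_{E_t}(x_0) = 1$. Symmetrically, if $t > c$ then $\{u \geq t\} \subseteq \{|u-c| \geq t-c\}$, and the same estimate forces the density to vanish, so $T\chi_{E_t}(x_0) = 0$. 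Consequently, at every good point and for every $t \neq f(x_0)$ we obtain $T\chi_{E_t}(x_0) = \chi_{\{f \geq t\}}(x_0)$.

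It remains to control the exceptional level $t = f(x_0)$. The functions $T\chi_{E_t}$ and $\chi_{\{f\geq t\}}$ can differ only on the set $\{x \in \partial\Omega : f(x) = t\}$ (together with the $\mathcal{H}^{N-1}$-null set of non-good points). The sets $\{f = t\}$, $t \in \mathbb{R}$, are pairwise disjoint subsets of $\partial\Omega$, and $\mathcal{H}^{N-1}(\partial\Omega) < \infty$ since $\Omega$ is bounded with Lipschitz boundary; hence for each $n$ there are at most $n\,\mathcal{H}^{N-1}(\partial\Omega)$ values of $t$ with $\mathcal{H}^{N-1}(\{f=t\}) > 1/n$, and taking the union over $n$ shows that $\mathcal{H}^{N-1}(\{f=t\}) > 0$ for at most countably many $t$. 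For every remaining $t$ the set on which the two functions differ is $\mathcal{H}^{N-1}$-null, which yields $T\chi_{E_t} = \chi_{\{f\geq t\}}$ $\mathcal{H}^{N-1}$-almost everywhere. The only genuinely delicate point is the behaviour exactly at the level $t = f(x_0)$, where the density of $E_t$ is not forced to be $0$ or $1$; this is precisely why a countable set of values of $t$ must be excluded, while everything else reduces to the Lebesgue-value property of the trace and Chebyshev's inequality.
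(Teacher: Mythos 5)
Your proof is correct and takes essentially the same approach as the paper's: both rest on the Chebyshev estimate bounding the relative measure of $\{u < t\}$ (resp. $\{u \geq t\}$) in $B(x_0,r)\cap\Omega$ by the average of $|u - f(x_0)|/|t - f(x_0)|$ at trace-Lebesgue points $x_0$, together with the observation that $\mathcal{H}^{N-1}(\{f = t\}) > 0$ for at most countably many $t$. The differences are cosmetic --- the paper argues by contradiction where you argue directly, and it builds into its full-measure set $\mathcal{Z}$ the points where the Lebesgue-value property of $T\chi_{E_t}$ itself holds, a restriction you should state explicitly when identifying the boundary density of $E_t$ with its trace.
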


\begin{proof}
Denote $E_t = \{ u \geq t \}$. Fix $t \in \mathbb{R}$ such that $\mathcal{H}^{N-1}(\{ f = t \}) = 0$ (this happens for all but countably many $t$). For $\mathcal{H}^{N-1}$-almost every $x \in \partial\Omega$ we have
$$ \dashint_{\Omega \cap B(x,r)} |u(y) - f(x)| \, dy \rightarrow 0 \qquad \text{ as } r \rightarrow 0$$
and
$$ \dashint_{\Omega \cap B(x,r)} |\chi_{E_t}(y) - T\chi_{E_t}(x)| \, dy \rightarrow 0 \qquad \text{ as } r \rightarrow 0;$$
we denote the set of such points by $\mathcal{Z}$. By our assumption on $t$, the set $\mathcal{Z} \cap \{ f \neq t\}$ is of full measure. Now, fix $x \in \mathcal{Z} \cap \{ f \neq t\}$.

There are two possibilities: either $x \in \{ f > t \}$ or $x \in \{ f < t \}$. Without loss of generality assume that $f(x) = s > t$. Suppose that $T\chi_{E_t}(x) \neq 1 = \chi_{\{ f > t \}}(x)$. Then on a subsequence $r_n \rightarrow 0$ we have
$$ \dashint_{\Omega \cap B(x,r_n)} |\chi_{E_t}(y) - 1| \, dy \geq c. $$
We rewrite this as
$$ c \leq \dashint_{\Omega \cap B(x,r_n)} |\chi_{E_t}(y) - 1| \, dy = \dashint_{\Omega \cap B(x,r_n)} |\chi_{\Omega \backslash E_t}(y)| \, dy = \frac{|(\Omega \cap B(x, r_n)) \backslash E_t|}{|\Omega \cap B(x, r_n)|}. $$
Now, we see that this leads to a contradiction with $Tu(x) = f(x) = s$. We calculate
$$ \dashint_{\Omega \cap B(x,r_n)} |u(y) - f(x)| \, dy = \frac{1}{|\Omega \cap B(x, r_n)|} \int_{\Omega \cap B(x,r_n)} |u(y) - s| \, dy \geq$$
$$ \geq \frac{1}{|\Omega \cap B(x, r_n)|} \int_{(\Omega \cap B(x,r_n)) \backslash E_t} |u(y) - s| \, dy \geq $$
$$ \geq \frac{1}{|\Omega \cap B(x, r_n)|} \int_{(\Omega \cap B(x,r_n)) \backslash E_t} |t - s| \, dy = $$
$$ = \frac{|(\Omega \cap B(x,r_n)) \backslash E_t|}{|\Omega \cap B(x, r_n)|}  |t - s| \geq c (t - s) > 0,$$
hence there exists a sequence $r_n \rightarrow 0$ such that the mean integral condition defining the trace of $u$ at $x$ is not satisfied, contradiction. Thus $T\chi_{E_t}(x) = 1 = \chi_{\{ f > t \}}(x)$.
\end{proof}

\section{$L^p$ regularity}

In this Section, we prove $L^{\frac{Np}{N-1}}$ regularity of least gradient functions for $L^p$ boundary data. The exponent we obtain is consistent with the exponent in the inclusion $BV(\Omega) \subset L^p(\Omega)$ for $p \leq \frac{N}{N-1}$; at the end of the Section, we provide an example that this estimate is optimal. The following Theorem is valid without any regularity assumptions on the metric integrand $\phi$. 
The first result in this Section is an estimate on the Lebesgue measure of a superlevel set of a function of $\phi$-least gradient. Then, we will prove that this estimate implies Theorem \ref{thm:lpregularity}, which is the main result in this Section.

\begin{lemma}\label{lem:measureestimate}
Suppose that $\Omega \subset \mathbb{R}^N$ is an open bounded set with Lipschitz boundary. Let $u \in BV(\Omega)$ be a $\phi$-least gradient function such that $Tu = f$. Then for almost all $t \in \mathbb{R}$ we have
$$ |\{ u \geq t \}| \leq C(\phi, N) (\mathcal{H}^{N-1}(\{ f \geq t \}))^{\frac{N}{N-1}}.$$

\end{lemma}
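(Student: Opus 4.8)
The plan is to reduce the statement to an isoperimetric estimate for the superlevel sets $E_t = \{u \geq t\}$. By Theorem \ref{thm:anisobgg}, for almost every $t \in \mathbb{R}$ the function $\chi_{\{u > t\}}$ is of $\phi$-least gradient in $\Omega$, and since $\{u > t\}$ and $\{u \geq t\}$ coincide up to a null set for all but countably many $t$, I may work with $\chi_{E_t}$ for almost every $t$. Intersecting with the full-measure set of $t$ provided by Lemma \ref{lem:traceforalmostallt}, I fix $t$ for which simultaneously $\chi_{E_t}$ is of $\phi$-least gradient and $T\chi_{E_t} = \chi_{\{f \geq t\}}$.

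First I would apply Lemma \ref{lem:variationestimate} to the $\phi$-least gradient function $\chi_{E_t}$, obtaining
\begin{equation*}
\int_\Omega |D\chi_{E_t}|_\phi \leq \int_{\partial\Omega} \phi(x, \nu^\Omega) \, |T\chi_{E_t}| \, d\mathcal{H}^{N-1} = \int_{\{f \geq t\} \cap \partial\Omega} \phi(x, \nu^\Omega) \, d\mathcal{H}^{N-1},
\end{equation*}
where the last equality uses $T\chi_{E_t} = \chi_{\{f \geq t\}}$. Invoking property $(3)$ of the metric integrand on both ends, namely $\phi(x,\nu^\Omega) \leq \Lambda$ on the right and $\int_\Omega |D\chi_{E_t}|_\phi \geq \lambda \, P(E_t, \Omega)$ on the left, I arrive at the relative-perimeter bound
\begin{equation*}
\lambda \, P(E_t, \Omega) \leq \Lambda \, \mathcal{H}^{N-1}(\{ f \geq t \}).
\end{equation*}

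The final step is to convert this bound on the perimeter of $E_t$ \emph{relative to} $\Omega$ into a bound on $|E_t|$ via the isoperimetric inequality in $\mathbb{R}^N$. Extending $\chi_{E_t}$ by zero outside $\Omega$ and using the trace formula for $BV$ functions on Lipschitz domains, the perimeter in the whole space decomposes as $P(E_t, \mathbb{R}^N) = P(E_t, \Omega) + \mathcal{H}^{N-1}(\{f \geq t\})$, so combining with the previous inequality gives $P(E_t, \mathbb{R}^N) \leq (\tfrac{\Lambda}{\lambda} + 1)\, \mathcal{H}^{N-1}(\{f \geq t\})$. The classical isoperimetric inequality $|E_t|^{\frac{N-1}{N}} \leq C_N \, P(E_t, \mathbb{R}^N)$, raised to the power $\frac{N}{N-1}$, then yields the claim with $C(\phi, N) = \big[C_N(\tfrac{\Lambda}{\lambda}+1)\big]^{\frac{N}{N-1}}$.

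I expect the main obstacle to be the justification of the perimeter decomposition $P(E_t, \mathbb{R}^N) = P(E_t, \Omega) + \mathcal{H}^{N-1}(\{f \geq t\})$: one must ensure that the zero-extension of $\chi_{E_t}$ is genuinely of finite perimeter in $\mathbb{R}^N$ and that the jump across $\partial\Omega$ contributes exactly the integral of the trace. This is where the Lipschitz regularity of $\partial\Omega$ is essential, and where care with the measure-zero exceptional sets of $t$ (from both Theorem \ref{thm:anisobgg} and Lemma \ref{lem:traceforalmostallt}) must be taken so that all the identities above hold for the same almost every $t$. Everything else is a direct application of the stated lemmas together with the two-sided ellipticity bound.
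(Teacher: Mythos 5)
Your proposal is correct and follows essentially the same route as the paper: the combination of Theorem \ref{thm:anisobgg} and Lemma \ref{lem:traceforalmostallt} to fix a good value of $t$, Lemma \ref{lem:variationestimate} with the ellipticity bounds $\lambda,\Lambda$ to control $P(E_t,\Omega)$, the zero-extension and $BV$ trace decomposition $P(E_t,\mathbb{R}^N)=P(E_t,\Omega)+\mathcal{H}^{N-1}(\{f\geq t\})$, and finally the isoperimetric inequality, yielding the same constant $C(\phi,N)=\bigl(C_N(\lambda^{-1}\Lambda+1)\bigr)^{\frac{N}{N-1}}$. Your explicit remark that $\{u>t\}$ and $\{u\geq t\}$ agree up to a null set for all but countably many $t$ is a small point of care that the paper passes over silently; otherwise the two arguments coincide.
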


\begin{proof}
Denote $E_t = \{ u \geq t \}$. Recall the isoperimetric inequality: if $E \subset \mathbb{R}^N$ is a bounded set of finite perimeter, then (see for instance \cite[Theorem 5.6.2]{EG})
$$ |E|^{\frac{N-1}{N}} \leq C_N P(E, \mathbb{R}^N).$$
We want to use the isoperimetric inequality to estimate the Lebesgue measure of the set $E_t$. To this end, as $E_t$ is defined as a superlevel set of $u$ and hence a subset of $\Omega$, we firstly have to estimate $P(E, \mathbb{R}^N)$. We recall that (see for instance \cite[Theorem 5.4.1]{EG}) if $\Omega$ is an open bounded set with Lipschitz boundary and $u_1 \in BV(\Omega)$ and $u_2 \in BV(\mathbb{R}^N \backslash \overline{\Omega})$, then the extension $\widetilde{u} = u_1 \chi_\Omega + u_2 \chi_{\mathbb{R}^N \backslash \overline{\Omega}}$ lies in $BV(\mathbb{R}^N)$ and
$$ \int_{\mathbb{R}^N} |D\widetilde{u}| = \int_\Omega |Du_1| + \int_{\mathbb{R}^N \backslash \overline{\Omega}} |Du_2| + \int_{\partial \Omega} |Tu_1 - Tu_2| d\mathcal{H}^{N-1}. $$
We use this result with $u_1 = \chi_{E_t}$ and $u_2 = 0$ to estimate $P(E_t, \mathbb{R}^N)$. For almost all $t$, so that the statements of Theorem \ref{thm:anisobgg} and Lemma \ref{lem:traceforalmostallt} hold, we calculate
$$ P(E_t, \mathbb{R}^N) = P(E_t, \Omega) + 0 + \int_{\partial\Omega} |T\chi_{E_t}| d\mathcal{H}^{N-1} = P(E_t, \Omega) + \mathcal{H}^{N-1}(\{ f \geq t \}),$$
where the last equality follows from Lemma \ref{lem:traceforalmostallt}. Now, we recall that by Theorem \ref{thm:anisobgg} $\chi_{E_t}$ is a function of $\phi$-least gradient for almost all $t \in \mathbb{R}$. By Lemma \ref{lem:variationestimate}
$$ P(E_t, \Omega) \leq \lambda^{-1} P_\phi(E_t, \Omega) = \lambda^{-1} \int_\Omega |D\chi_{E_t}|_\phi \leq \lambda^{-1} \int_{\partial\Omega} \phi(x, \nu^\Omega) |T\chi_{E_t}| \leq$$
$$ \leq \lambda^{-1} \Lambda \int_{\partial\Omega} |T\chi_{E_t}| = \lambda^{-1} \Lambda \mathcal{H}^{N-1}(\{ f \geq t \}),$$
where in the last equality we use Lemma \ref{lem:traceforalmostallt}. Hence
$$ P(E_t, \mathbb{R}^N)\leq (\lambda^{-1} \Lambda + 1) \mathcal{H}^{N-1}(\{ f \geq t \}) $$
and by isoperimetric inequality we obtain
$$ |E_t|^{\frac{N-1}{N}} \leq C_N (\lambda^{-1} \Lambda + 1) \mathcal{H}^{N-1}(\{ f \geq t \}).$$
We take both sides of this inequality to the power $\frac{N}{N-1}$ to obtain the desired inequality with $C(\phi, N) = (C_N (\lambda^{-1} \Lambda + 1))^{\frac{N}{N-1}}$.
\end{proof}

\begin{theorem}\label{thm:lpregularity}
Suppose that $\Omega \subset \mathbb{R}^N$ is an open bounded set with Lipschitz boundary. Suppose that $1 \leq p < \infty$. Let $u \in BV(\Omega)$ be a $\phi$-least gradient function such that $Tu = f \in L^p(\partial\Omega)$. Then $u \in L^{\frac{Np}{N-1}}(\Omega)$.
\end{theorem}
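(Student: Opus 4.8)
The plan is to deduce the theorem from the measure estimate of Lemma~\ref{lem:measureestimate} via the layer-cake (Cavalieri) representation of the $L^q$ norm, with $q = \frac{Np}{N-1}$. Since $u$ need not be nonnegative, I would first observe that $-u$ is again a function of $\phi$-least gradient with trace $-f$: indeed, $1$-homogeneity forces $\phi(x,-\xi)=\phi(x,\xi)$, so $\int_\Omega |D(-u)|_\phi = \int_\Omega |Du|_\phi$ and minimality is preserved under $v \mapsto -v$. Hence Lemma~\ref{lem:measureestimate} applies to both $u$ and $-u$, giving for a.e.\ $t>0$ the two bounds $|\{u \geq t\}| \leq C\,(\mathcal{H}^{N-1}(\{f \geq t\}))^{\frac{N}{N-1}}$ and $|\{u \leq -t\}| \leq C\,(\mathcal{H}^{N-1}(\{f \leq -t\}))^{\frac{N}{N-1}}$, where $C = C(\phi,N)$.

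Next I would write
\[
\int_\Omega |u|^q \, dx = q\int_0^\infty t^{q-1}\, |\{|u|>t\}|\, dt = q\int_0^\infty t^{q-1}\big(|\{u>t\}| + |\{u<-t\}|\big)\, dt,
\]
and treat the two summands symmetrically. Writing $\mu(t) = \mathcal{H}^{N-1}(\{f \geq t\})$ and using $|\{u>t\}| \leq |\{u\geq t\}|$ together with the estimate above, the positive part is controlled by $Cq \int_0^\infty t^{q-1}\mu(t)^{\frac{N}{N-1}}\, dt$; the negative part is handled identically with the distribution function of $-f$.

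The crux is then the purely one-dimensional inequality
\[
\int_0^\infty t^{q-1}\mu(t)^{\frac{N}{N-1}}\, dt \leq \tfrac1p\,\|f\|_{L^p(\partial\Omega)}^{\,q}.
\]
I would prove it by the algebraic identity $t^{q-1}\mu(t)^{\frac{N}{N-1}} = \big(t^p\mu(t)\big)^{\frac{1}{N-1}}\, t^{p-1}\mu(t)$ (which uses exactly $q = \frac{Np}{N-1}$ and $\frac{N}{N-1}-1 = \frac1{N-1}$), combined with the uniform Chebyshev-type bound $t^p\mu(t) \leq \int_{\{f\geq t\}} |f|^p \, d\mathcal{H}^{N-1} \leq \|f\|_{L^p(\partial\Omega)}^p$, valid for every $t>0$. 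Pulling the factor $(t^p\mu(t))^{\frac1{N-1}} \leq \|f\|_{L^p(\partial\Omega)}^{\frac{p}{N-1}}$ out of the integral reduces the claim to $\int_0^\infty t^{p-1}\mu(t)\, dt = \frac1p\int_{\partial\Omega}(f^+)^p\, d\mathcal{H}^{N-1}$, which is finite by hypothesis. Combining the two parts yields $\|u\|_{L^q(\Omega)}^q \leq \frac{Cq}{p}\big(\|f^+\|_p^q + \|f^-\|_p^q\big) < \infty$.

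I do not expect a serious obstacle: the only points needing care are bookkeeping ones. First, Lemma~\ref{lem:measureestimate} holds only for a.e.\ $t$, but since everything is integrated in $t$ the exceptional null set is harmless, as is the distinction between $\{u>t\}$ and $\{u\geq t\}$, which differ on at most countably many $t$. Second, one must check that the symmetry argument for $-u$ genuinely applies, which is where the evenness of $\phi$ enters. The real content of the argument is the above one-line reduction of the $t$-integral, which converts the $\frac{N}{N-1}$-power estimate for superlevel sets into an $L^{\frac{Np}{N-1}}$ bound.
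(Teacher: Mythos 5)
Your proposal is correct and follows essentially the same route as the paper's own proof: layer-cake representation of the $L^q$ norm, Lemma~\ref{lem:measureestimate} for the superlevel sets, the Chebyshev bound $t^p\mu(t)\le\|f\|_{L^p(\partial\Omega)}^p$ to peel off the factor $\mu(t)^{\frac{1}{N-1}}$, and reduction to $\int_0^\infty t^{p-1}\mu(t)\,dt$, with the negative part handled by applying the lemma to $-u$ (your splitting of $|u|$ via $\{u>t\}$ and $\{u<-t\}$ is just the paper's decomposition $u=u_+-u_-$ in disguise, and your constant $\frac{q}{p}$ equals the paper's $\frac{N}{N-1}$). Your explicit justification that $-u$ is again of $\phi$-least gradient via the evenness of $\phi$ is a detail the paper leaves implicit, and is a welcome addition.
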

 
\begin{proof}
Denote $q = \frac{Np}{N-1}$. Let us decompose $u$ into a positive and negative part, i.e. $u = u_+ - u_-$, where $u_+ = \max(u,0)$ and $u_- = \max(-u,0)$. Let $f = f_+ - f_-$ be an analogous decomposition for $f$. We will prove that $u_+ \in L^q(\Omega)$ and at the end remark how to modify this proof to show that also $u_- \in L^q(\Omega)$.

Firstly, we recall that for any measure space $(X, \mu)$ we have an inclusion $L^p(X, \mu) \subset L^p_w(X, \mu)$, where $L^p_w(X, \mu)$ denotes the weak Lebesgue space, and that the seminorm $\| g \|_{L_w^p(X, \mu)}$ is bounded by the norm $\| g \|_{L^p(X, \mu)}$. In other words, for all $t > 0$
\begin{equation*}
\mu(\{ |g| \geq t \}) \leq \frac{\| g \|_{L^p(X, \mu)}^p}{t^p}.
\end{equation*}
We apply this to $(X, \mu) = (\partial\Omega, \mathcal{H}^{N-1})$, $g = f_+ \in L^p(\partial\Omega)$ and take both sides of the inequality to power $\frac{1}{N-1}$ to obtain that for all $t > 0$
\begin{equation*}\tag{*}
(\mathcal{H}^{N-1}(\{ f_+ \geq t \}))^{\frac{1}{N-1}} \leq \frac{\| f_+ \|_{L^p(\partial\Omega)}^\frac{p}{N-1}}{t^{\frac{p}{N-1}}}.
\end{equation*}
Denote $E_t = \{ u \geq t \}$. While $u_+$ is not necessarily a function of $\phi-$least gradient, we immediately see that for $t > 0$ we have $\{ u_+ \geq t \} = E_t$; hence, by Theorem \ref{thm:anisobgg} almost every superlevel set of $u_+$ is area-minimising. We calculate
$$ \int_\Omega (u_+)^q dx = q \int_0^\infty t^{q-1} |\{ u_+ \geq t \}| dt = q \int_0^\infty t^{q-1} |E_t| dt. $$
Now, we use Lemma \ref{lem:measureestimate} to estimate the last integral. 
$$ q \int_0^\infty t^{q-1} |E_t| dt \overset{(3.1)}{\leq} q \int_0^\infty t^{q-1} C(\phi, N) (\mathcal{H}^{N-1}(\{ f \geq t \}))^{\frac{N}{N-1}} dt = $$
$$ = q \int_0^\infty t^{q-1} C(\phi, N) (\mathcal{H}^{N-1}(\{ f_+ \geq t \}))^{\frac{N}{N-1}} dt = $$
$$ = C(\phi, N) \, q \int_0^\infty t^{q-1} (\mathcal{H}^{N-1}(\{ f_+ \geq t \}))^{\frac{N}{N-1}} \mathcal{H}^{N-1}(\{ f_+ \geq t \}) dt \overset{(*)}{\leq} $$
$$ \overset{(*)}{\leq} C(\phi, N) \, q \int_0^\infty t^{q-1} \frac{\| f_+ \|_{L^p(\partial\Omega)}^\frac{p}{N-1}}{t^{\frac{p}{N-1}}} \mathcal{H}^{N-1}(\{ f_+ \geq t \}) dt = $$
$$ = C(\phi, N) \| f_+ \|_{L^p(\partial\Omega)}^\frac{p}{N-1} \, q \int_0^\infty t^{q-1 - \frac{p}{N-1}} \mathcal{H}^{N-1}(\{ f_+ \geq t \}) dt = $$
$$ = C(\phi, N) \| f_+ \|_{L^p(\partial\Omega)}^\frac{p}{N-1} \, \frac{q}{p} p \int_0^\infty t^{p-1} \mathcal{H}^{N-1}(\{ f_+ \geq t \}) dt = $$
$$ = \frac{N}{N-1} C(\phi, N) \| f_+ \|_{L^p(\partial\Omega)}^\frac{p}{N-1} \int_{\partial\Omega}  (f_+)^p d\mathcal{H}^{N-1}.$$
We combine the above estimates to obtain
$$ \| u_+ \|_{L^q(\Omega)}^q \leq \frac{N}{N-1} C(\phi, N) \| f_+ \|_{L^p(\partial\Omega)}^{\frac{p}{N-1} + p}.$$
We take both sides to power $\frac{1}{q}$ and obtain
$$ \| u_+ \|_{L^q(\Omega)} \leq (\frac{N}{N-1} C(\phi, N))^{\frac{N-1}{Np}} \| f_+ \|_{L^p(\partial\Omega)}.$$
Hence, if $f_+ \in L^p(\partial\Omega)$, then $u_+ \in L^q(\Omega)$. Now, we make a similar calculation for $u_-$: we take $\widetilde{E_t} = \{ u \leq t \}$ for $t < 0$. We easily see that $\{ u_- \geq -t \} = \widetilde{E_t}$ and we proceed as above, except for the fact that we use a Lemma \ref{lem:measureestimate} for $-u$ in place of $u$ to estimate the measure of $\widetilde{E_t}$. Finally, as $u_+ \in L^q(\Omega)$ and $u_- \in L^q(\Omega)$, we have that $u \in L^q(\Omega)$.
\end{proof}

\begin{remark}
As the proof of Theorem \ref{thm:lpregularity} comes in two parts in which we estimate separately the $L^{\frac{Np}{N-1}}$ norm of $u_\pm$ in terms of the $L^p$ norm of $f_\pm$, a following variant of the Theorem holds: in the notation of Theorem \ref{thm:lpregularity}, let $f_\pm \in L^p(\partial\Omega)$. Then $u_\pm \in L^{\frac{Np}{N-1}}(\Omega)$.
\end{remark}

\begin{remark}\label{rmk:maximumprinciple}
Notice that the estimate on the norm of $u$ does not depend on $\Omega$, only on the dimension (both directly and via the constant in the isoperimetric inequality) and the bounds on the metric integrand $\phi$. Moreover, we see that if we let $p \rightarrow \infty$, we obtain exactly the maximum principle for least gradient functions (see for instance \cite[Theorem 5.1]{HKLS}):
$$ \| u_+ \|_{L^\infty(\Omega)} \leq \| f_+ \|_{L^\infty(\partial\Omega)}.$$
\end{remark}

Finally, we present an example that showing that the exponent $\frac{Np}{N-1}$ in Theorem \ref{thm:lpregularity} is optimal.

\begin{example}\label{ex:optimal}
Let $\Omega = \{ (x,y): |x-1| + |y| \leq 1 \} \subset \mathbb{R}^2$. Take $f(x,y) = g(x)$, where $g \in L^1((0,2)) \cap C((0,2))$ is a decreasing function such that $g(x) = 1$ on $[1,2)$.  Then the function $u(x,y) = g(x)$, i.e. such that all level lines are vertical, is a function of least gradient with trace $f$.

Now, we look at the measure of superlevel sets of $u$. For all $t > 1$, $\{ u \geq t \}$ is a triangle with vertices $(0,0), (g^{-1}(t), g^{-1}(t))$ and $(g^{-1}(t), - g^{-1}(t))$, so
$$ |\{ u \geq t \}| = (g^{-1}(t))^2.$$
Let $p \geq 1$. We use this estimate to calculate
$$ \int_\Omega u^p dx = p \int_0^\infty t^{p-1} | \{ u \geq t \} |dt = p \int_0^\infty t^{p-1} (g^{-1}(t))^2 dt \geq p \int_1^\infty t^{p-1} (g^{-1}(t))^2 dt.$$
Now, we fix a function $g_n$ defined by the formula $g_n(x) = x^{-1 + \frac{1}{n}}$. We see that $g_n$ is continuous, strictly decreasing, and that $g(x) = 1$ on $[1,2)$. We put $g_n$ in the calculation above and obtain
$$ \int_\Omega (u_n)^p dx \geq p \int_1^\infty t^{p-1} t^{-\frac{2n}{n-1}} dt = p \int_1^\infty t^{p-\frac{2n}{n-1}-1} dt,$$
and the last integral is finite if and only if $p < \frac{2n}{n-1}$. We pass with $n \rightarrow \infty$ and see that the statement of Theorem \ref{thm:lpregularity} can only hold for $p \leq 2$, which is precisely the exponent given by Theorem \ref{thm:lpregularity}.
%$$ g(x) = \sum_{n=2}^\infty \frac{2^{-n}}{n-1} (x^{-1 + \frac{1}{n}} - 1) \chi_{(0,1)}(x).$$
\end{example}

\section{$L^\infty_{loc}$ regularity}

When the boundary data $f$ lie in $L^\infty(\partial\Omega)$, then the maximum principle as in Remark \ref{rmk:maximumprinciple} implies that any solution $u$ to Problem \eqref{aproblem} lies in $L^\infty(\Omega)$. Conversely, if $f \notin L^\infty(\partial\Omega)$, then $u \notin L^\infty(\Omega)$, as the trace of a bounded function cannot be unbounded. However, it turns out that $u$ may blow up only near the boundary of $\Omega$.

This Section contains three versions of the result stating that $\phi$-least gradient functions are locally bounded. Firstly, we prove this result on a toy model: we assume that $\Omega \subset \mathbb{R}^2$ and that $\phi$ is the Euclidean norm. Then, in Proposition \ref{prop:locallybounded} we prove this in $\Omega \subset \mathbb{R}^2$ for any metric integrand $\phi$, using a characterisation of one-dimensional integral currents. Finally, in Theorem \ref{thm:liniftyloc} we prove this in any dimension for the isotropic least gradient problem, using the monotonicity formula for area-minimising boundaries.

\begin{proposition}
Let $\Omega \subset \mathbb{R}^2$ be an open bounded set with Lipschitz boundary. Suppose that $u$ is a least gradient function. Then $u \in L^\infty_{loc}(\Omega)$.
\end{proposition}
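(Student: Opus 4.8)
The plan is to argue by contradiction, using the coarea formula together with the rigidity of the level sets of a least gradient function in the plane. Fix an interior point $x_0 \in \Omega$ and a radius $R > 0$ with $\overline{B(x_0,R)} \subset \Omega$. It suffices to show that $u$ is essentially bounded above on $B(x_0,R/2)$: the bound from below then follows by applying the same statement to $-u$, which is again a least gradient function, and $L^\infty_{loc}(\Omega)$ follows by covering an arbitrary compact subset of $\Omega$ by finitely many such balls. Since $u \in BV(\Omega)$ its total variation is finite, and the coarea formula gives
\begin{equation*}
\int_\Omega |Du| = \int_{-\infty}^{\infty} P(E_t, \Omega)\, dt < \infty,
\end{equation*}
where $E_t = \{u \geq t\}$. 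By Theorem \ref{tw:bgg} the set $E_t$ is minimal for every $t$, so $P(E_t,\Omega) = \mathcal{H}^1(\partial E_t \cap \Omega)$.

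The geometric heart of the argument is that for an area-minimising set in $\mathbb{R}^2$ the boundary $\partial E_t \cap \Omega$ is a union of straight line segments, each of which is a chord of $\Omega$ with both endpoints on $\partial\Omega$; in particular no such segment terminates at an interior point. This is classical: in dimension two the singular set of a minimal boundary is empty, so $\partial E_t \cap \Omega$ is a smooth embedded $1$-manifold, and minimality forces each component to be a geodesic, i.e. a straight line. I would invoke this as a cited fact rather than reprove it. Suppose now, for contradiction, that the essential supremum of $u$ over $B(x_0,R/2)$ equals $+\infty$. Then $|E_t \cap B(x_0,R/2)| > 0$ for every $t$. Because $u \in L^1(\Omega)$, the ball $B(x_0,R/2)$ cannot be essentially contained in $E_t$ once $t$ exceeds $T := |B(x_0,R/2)|^{-1}\int_{B(x_0,R/2)} u$; hence for every $t > T$ the essential boundary $\partial E_t$ meets $B(x_0,R/2)$.

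For such $t$, at least one chord of $\partial E_t$ passes through $B(x_0,R/2)$. Since its endpoints lie on $\partial\Omega$, hence outside $\overline{B(x_0,R)}$, while it comes within distance $R/2$ of $x_0$, its intersection with $B(x_0,R)$ is a full chord of that disk, of length at least $2\sqrt{R^2-(R/2)^2} = R\sqrt3$. Therefore $P(E_t,\Omega) = \mathcal{H}^1(\partial E_t\cap\Omega) \geq R\sqrt3$ for every $t > T$, and the coarea identity yields
\begin{equation*}
\int_\Omega |Du| \;\geq\; \int_T^\infty \mathcal{H}^1(\partial E_t \cap B(x_0,R))\, dt \;\geq\; \int_T^\infty R\sqrt3 \, dt \;=\; +\infty,
\end{equation*}
contradicting the finiteness of the total variation. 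Hence $u$ is essentially bounded above on $B(x_0,R/2)$, and the proof is complete.

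I expect the main obstacle to be the structural statement that the boundaries $\partial E_t$ consist of chords with no interior endpoints, since this is exactly what guarantees that a boundary segment meeting the small ball crosses the whole of $B(x_0,R)$ and thus has a definite length there. If one prefers to rely only on the results quoted in the excerpt, the same length lower bound can be obtained from the monotonicity formula (Proposition \ref{prop:monotonicity}): choosing $x \in \partial E_t \cap B(x_0,R/2)$, one has $B(x,R/2) \subset B(x_0,R)$ and $R/2 < \operatorname{dist}(x,\partial\Omega)$, so the density estimate gives $\mathcal{H}^1(\partial E_t \cap B(x,R/2)) \geq \omega_1 (R/2) = R$, and the contradiction goes through verbatim with $R\sqrt3$ replaced by $R$.
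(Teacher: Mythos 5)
Your proof is correct and follows essentially the same strategy as the paper's: argue by contradiction, use the classical fact that boundaries of minimal sets in the plane are chords with endpoints on $\partial\Omega$ to get a uniform lower bound on $P(E_t,\Omega)$ for all large $t$, and conclude via the coarea formula that $u \notin BV(\Omega)$. Your fallback argument via the monotonicity formula is also sound, and is in fact exactly the route the paper takes in its higher-dimensional version of this result (Theorem \ref{thm:liniftyloc}).
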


\begin{proof}
Denote $E_t = \{ u \geq t \}$. Let $\Omega' \subset \subset \Omega$ be open with Lipschitz boundary and suppose that $u \notin L^\infty(\Omega')$. Without loss of generality $u$ is unbounded from above. In particular, for each $t > 0$ we have $|E_t \cap \Omega'| > 0$. As $u \in L^1(\Omega)$, for sufficiently large $t \geq M$ we have $|E_t \cap \Omega'| \neq |\Omega'|$, hence $\partial E_t \cap \Omega' \neq \emptyset$.

As by Theorem \ref{tw:bgg} each connected component of $\partial E_t$ is a line segment with ends on $\partial\Omega$, the connected component of $E_t$ passing through $\Omega'$ has length equal at least to $\text{dist}(\partial\Omega, \partial\Omega')$. By the co-area formula
$$ \int_\Omega |Du| = \int_\mathbb{R} P(E_t, \Omega) \geq \int_M^\infty P(E_t, \Omega) \geq \int_M^\infty \text{dist}(\partial\Omega, \partial\Omega') = +\infty, $$
contradiction with $u \in BV(\Omega)$.
\end{proof}

\begin{proposition}\label{prop:locallybounded}
Let $\Omega \subset \mathbb{R}^2$ be an open bounded set with Lipschitz boundary. Suppose that $\phi$ is a metric integrand and that $u$ is a $\phi$-least gradient function. Then $u \in L^\infty_{loc}(\Omega)$.
\end{proposition}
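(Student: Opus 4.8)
The plan is to follow the scheme of the preceding Proposition, replacing the statement that level lines are line segments by the weaker structural information available in the anisotropic setting. Since local boundedness is checked on balls, I would fix a ball $B = B(x_0,r)$ with $\overline{B} \subset \Omega$ and set $d = \mathrm{dist}(\overline{B}, \partial\Omega) > 0$. Suppose for contradiction that $u$ is unbounded from above on $B$; the case of unboundedness from below is symmetric, applied to $-u$. Writing $E_t = \{ u \geq t\}$, integrability of $u$ forces $|E_t| \to 0$, so there is an $M$ such that $0 < |E_t \cap B| < |B|$ for every $t \geq M$. As $B$ is connected, this means that the essential boundary $\partial^* E_t$ meets $B$, say at a point $p_t$ (otherwise $\chi_{E_t}$ would be constant a.e.\ on $B$).

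The key step is to control the connected component $\Gamma_t$ of $\partial E_t \cap \Omega$ through $p_t$. By Theorem \ref{thm:anisobgg}, for almost every $t$ the set $E_t$ is $\phi$-minimal, so the boundary of $E_t$, regarded as an integral $1$-current in $\Omega$, is $\phi$-mass-minimising. I would invoke the decomposition of integral $1$-currents into indecomposable pieces, each of which is the current associated to a simple Lipschitz curve, to express $\partial E_t \cap \Omega$ as a countable union of such curves. Each piece is either a closed loop contained in $\Omega$ or an arc with both endpoints on $\partial\Omega$. A closed loop is excluded by $\phi$-minimality: the region it bounds is compactly contained in $\Omega$, so filling it in (or deleting it) is a compactly supported modification that strictly lowers the $\phi$-perimeter, by at least $\lambda$ times the Euclidean length of the loop, contradicting that $\chi_{E_t}$ is a $\phi$-least gradient function. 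Hence $\Gamma_t$ is an arc through $p_t \in B$ with endpoints on $\partial\Omega$.

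Such an arc has Euclidean length at least $\mathrm{dist}(p_t, \partial\Omega) \geq d$, so $P(E_t, \Omega) = \mathcal{H}^1(\partial^* E_t \cap \Omega) \geq \mathcal{H}^1(\Gamma_t) \geq d$ for every $t \geq M$. Since $BV_\phi(\Omega) = BV(\Omega)$, the function $u$ has finite Euclidean total variation, and the coarea formula yields
$$ \int_\Omega |Du| = \int_{\mathbb{R}} P(E_t, \Omega)\, dt \geq \int_M^\infty P(E_t, \Omega)\, dt \geq \int_M^\infty d\, dt = +\infty, $$
contradicting $u \in BV(\Omega)$. The main obstacle is the structural step: making rigorous that in the plane a $\phi$-minimal boundary decomposes into simple curves that terminate only on $\partial\Omega$ and contains no interior loops. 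This is precisely where the characterisation of one-dimensional integral currents is needed, together with the elementary filling-in argument based on the ellipticity of $\phi$ to discard closed components; once this is in place, the length bound and the coarea estimate are immediate.
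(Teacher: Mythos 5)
Your proposal reproduces the paper's argument step for step: the same Federer decomposition of $D\chi_{E_t}$ into indecomposable integral $1$-currents (simple Lipschitz curves), the same claim that minimality rules out closed loops, the same length bound for the component through a point of the inner set, and the same coarea contradiction. The only place where you go beyond the paper is in justifying the exclusion of loops, and that justification has a genuine gap: you assert that the region bounded by a closed loop is compactly contained in $\Omega$. That is true only when $\Omega$ is simply connected, which is not assumed --- $\Omega$ is merely open, bounded and Lipschitz, so it may have holes, and a loop winding around a hole encloses a region whose closure meets $\partial\Omega$; filling it in (or deleting it) changes the trace on that component of $\partial\Omega$ and is not a compactly supported competitor. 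Moreover, no argument can exclude such loops: take $\Omega=\{1/2<|x|<2\}$ and $\phi(x,\xi)=\bigl(1+A(|x|-1)^2\bigr)|\xi|$ with $A$ large. The function $r\mapsto r\bigl(1+A(r-1)^2\bigr)$ attains its minimum over $[1/2,2]$ at an interior point $r^*\in(1/2,1)$, and a slicing argument in the angular variable shows that $E^*=\{1/2<|x|<r^*\}$ is $\phi$-minimal, while $\partial E^*\cap\Omega$ is the interior circle $\{|x|=r^*\}$ --- a closed loop. (The paper's own proof asserts that ``none of these curves are closed loops'' without justification, so it is open to the same objection; your more explicit version simply makes the gap visible.)

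The gap is reparable, and the rest of your proof then stands. A loop through $p_t$ falls into one of two cases. If the loop together with the region it bounds is compactly contained in $\Omega$, your filling/deletion argument applies and minimality excludes it. Otherwise there is a point $h\notin\Omega$ lying on the loop or enclosed by it. If $h$ lies on the loop, then $p_t$ and $h$ are two of its points with $|p_t-h|\geq\mathrm{dist}(p_t,\partial\Omega)\geq d$. If $h$ is enclosed, the ray starting at $h$ in the direction $h-p_t$ must cross the loop at some point $q$, and then $|p_t-q|=|p_t-h|+|h-q|\geq d$. In either case the loop has diameter at least $d$, and a closed curve has length at least twice its diameter, so that component has $\mathcal{H}^1$-measure at least $2d$. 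Hence every component of $\partial E_t$ meeting $B$ --- an arc reaching $\partial\Omega$, or a surviving loop --- has length at least $d$, the lower bound $P(E_t,\Omega)\geq d$ holds for a.e.\ $t\geq M$, and your coarea estimate finishes the proof exactly as written.
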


\begin{proof}
Let $A \subset \Omega$ be a set of finite perimeter. As $\Omega \subset \mathbb{R}^2$, the measure $D\chi_A$ is a one-dimensional integral current. By \cite[\S 4.2.25]{Fed}, each one-dimensional integral current may be decomposed into a (possibly infinite) sum of indecomposable integral currents. Each such current  $T$ is an oriented simple curve with finite length, i.e. its support is parametrised by a function $h: \mathbb{R} \rightarrow \mathbb{R}^N$ with Lip$(h) \leq 1$ and $f_{\#}((0,  \mathbb{M}(T))) = T$.

Let $\Omega' \subset \subset \Omega$ be open with Lipschitz boundary and suppose that $u \notin L^\infty(\Omega')$. Without loss of generality $u$ is unbounded from above. In particular, for each $t > 0$ we have $|E_t \cap \Omega'| > 0$. As $u \in L^1(\Omega)$, for sufficiently large $t \geq M$ we have $|E_t \cap \Omega'| \neq |\Omega'|$, hence $\partial E_t \cap \Omega' \neq \emptyset$.

Let $E_t$ be as above, hence it is a $\phi-$minimal set. Then $\partial^* E_t$, the reduced boundary of $E_t$, can be represented (up to a set of $\mathcal{H}^1$-measure zero) as a possibly infinite union of Lipschitz curves. We have
$$ \partial^* E_t \cup N = \bigcup_i \Gamma_i,$$
where $\Gamma_i$ are Lipschitz curves and $\mathcal{H}^1(N) = 0$. As $E_t$ is a $\phi$-minimal set, none of these curves are closed loops. Without loss of generality, assume that $x \in \Gamma_i$ for some $i \in \mathbb{N}$; if $x \in N$, then (with our convention of representing sets of finite perimeter) we could replace it by a point in some $\Gamma_i$ arbitrarily close to $x$. Now, we notice that the Euclidean length of a Lipschitz curve connecting $x$ and a point in $\partial\Omega$ is at least $\text{dist}(\partial\Omega, \partial\Omega')$  and estimate
$$ P_\phi(E_t, \Omega) = \int_{\partial^* F} \phi(x, \nu(x)) d\mathcal{H}^1 = \sum_{i = 1}^\infty \int_{\Gamma_i} \phi(x, \nu(x)) d\mathcal{H}^1 \geq $$
$$= \int_{\Gamma_i} \phi(x, \nu(x)) d\mathcal{H}^1 \geq \int_{\Gamma_i} \lambda |\nu(x)| d\mathcal{H}^1 = \lambda \mathcal{H}^1(\Gamma_i) \geq \lambda \, \text{dist}(\partial\Omega, \partial\Omega'),$$
hence we have a uniform bound from below. By the co-area formula
$$ \int_\Omega |Du| = \int_\mathbb{R} P(E_t, \Omega) \geq \int_M^\infty P(E_t, \Omega) \geq \int_M^\infty \text{dist}(\partial\Omega, \partial\Omega') = +\infty, $$
contradiction with $u \in BV(\Omega)$.
\end{proof}

\begin{theorem}\label{thm:liniftyloc}
Let $\Omega \subset \mathbb{R}^N$ be an open bounded set with Lipschitz boundary. Suppose that $u$ is a least gradient function. Then $u \in L^\infty_{loc}(\Omega)$.
\end{theorem}

\begin{proof}
We start as in the previous Propositions: denote $E_t = \{ u \geq t \}$, let $\Omega' \subset \subset \Omega$ be open with Lipschitz boundary and suppose that $u$ is unbounded from above. In particular, for sufficiently large $t \geq M$ we have $\partial E_t \cap \Omega' \neq \emptyset$.

As previously, we intend to use the co-area formula and we need to estimate from below the perimeter of $E_t$. To this end, we will use Proposition \ref{prop:monotonicity}, i.e. the monotonicity formula. We recall that as $E_t$ are area-minimising, we have $P(E_t, \Omega) = \mathcal{H}^{N-1}(\partial E_t)$. Now, let us fix $x \in \partial E_t \cap \Omega'$. Then, in the notation of Proposition \ref{prop:monotonicity}, the density $\Theta$ satisfies $\Theta(D\chi_{E_t},x) \geq 1$, hence $f(x,r) \geq 1$ for $r < \text{dist}(x, \partial\Omega)$. We set $r = \frac{\text{dist}(\partial \Omega, \partial\Omega')}{2}$ and calculate
$$ \int_{\Omega} |Du| = \int_\mathbb{R} P(E_t, \Omega) dt \geq \int_{M}^\infty P(E_t, \Omega) dt = $$
$$ = \int_{M}^\infty \mathcal{H}^{N-1}(\partial E_t) dt \geq  \int_{M}^\infty \mathcal{H}^{N-1}(\partial E_t \cap B(x_t, \frac{\text{dist}(\partial\Omega, \partial\Omega')}{2})) dt \geq $$
$$ \geq \int_{M}^\infty \omega_{N-1} (\frac{\text{dist}(\partial\Omega, \partial\Omega')}{2})^{N-1} dt = + \infty, $$
contradiction with $u \in BV(\Omega)$.
\end{proof}

In fact, this proof leads to an explicit bound on the essential range on $u$ on $\Omega'$, which depends on $\| Tu \|_{L^1(\partial\Omega)}$ and $\text{dist}(\partial\Omega, \partial\Omega')$. Before we state Corollary \ref{cor:blowuprate}, let us notice that
$$ \text{ess} \, \sup_{\Omega'} u = \sup_{t: \partial E_t \cap \Omega' \neq \emptyset} t$$
and
$$ \text{ess} \, \inf_{\Omega'} u = \inf_{t: \partial E_t \cap \Omega' \neq \emptyset} t.$$

\begin{corollary}\label{cor:blowuprate}
Let $\Omega \subset \mathbb{R}^N$ be an open bounded set with Lipschitz boundary and $\Omega' \subset \subset \Omega$ be an open bounded set with Lipschitz boundary. Suppose that $u$ is a least gradient function with trace $Tu = f \in L^1(\partial\Omega)$. Then 
$$ \text{ess} \, \sup_{\Omega'} u - \text{ess} \, \inf_{\Omega'} u \leq \frac{C(N) \| f \|_{L^1(\partial\Omega)}}{(\text{dist}(\partial\Omega, \partial\Omega'))^{N-1}}. $$
The left hand side of the above inequality describes the width of the essential range of $u$ on $\Omega'$. 
\end{corollary}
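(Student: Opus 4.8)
The plan is to make the proof of Theorem~\ref{thm:liniftyloc} quantitative by integrating the monotonicity lower bound over the entire range of values attained by $u$ on $\Omega'$. Write $d = \text{dist}(\partial\Omega, \partial\Omega')$, and set $a = \text{ess}\inf_{\Omega'} u$ and $b = \text{ess}\sup_{\Omega'} u$. Since $u \in L^\infty_{loc}(\Omega)$ by Theorem~\ref{thm:liniftyloc}, both $a$ and $b$ are finite, and the quantity to be estimated is exactly $b - a$.

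First I would record the pointwise perimeter bound already implicit in the proof of Theorem~\ref{thm:liniftyloc}. For any $t$ with $\partial E_t \cap \Omega' \neq \emptyset$, choose $x_t \in \partial E_t \cap \Omega'$; then $\text{dist}(x_t, \partial\Omega) \geq d$, so the ball $B(x_t, d/2)$ is contained in $\Omega$, and the monotonicity formula (Proposition~\ref{prop:monotonicity}), together with $\Theta(D\chi_{E_t}, x_t) \geq 1$ and $P(E_t,\Omega) = \mathcal{H}^{N-1}(\partial E_t)$, yields
$$ P(E_t, \Omega) \geq \mathcal{H}^{N-1}\big(\partial E_t \cap B(x_t, d/2)\big) \geq \omega_{N-1}\,(d/2)^{N-1}. $$
The crux is then to argue that this bound is available for almost every $t \in (a,b)$, i.e. that $\partial E_t \cap \Omega' \neq \emptyset$ throughout the open interval $(a,b)$. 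Once this is established, the co-area formula combined with the above gives
$$ \int_\Omega |Du| = \int_{\mathbb{R}} P(E_t,\Omega)\,dt \geq \int_a^b P(E_t,\Omega)\,dt \geq (b-a)\,\omega_{N-1}(d/2)^{N-1}. $$
To close the estimate I would bound the left-hand side from above: in the isotropic setting $\phi(x,\xi) = |\xi|$, so Lemma~\ref{lem:variationestimate} gives $\int_\Omega |Du| \leq \int_{\partial\Omega} |Tu|\,d\mathcal{H}^{N-1} = \|f\|_{L^1(\partial\Omega)}$. Dividing and rearranging then produces the claimed inequality with the explicit constant $C(N) = 2^{N-1}/\omega_{N-1}$.

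The main obstacle is precisely the interval claim that $\partial E_t \cap \Omega' \neq \emptyset$ for \emph{every} $t \in (a,b)$, rather than merely at the endpoints supplied by the two identities preceding the statement. For such $t$ both $E_t \cap \Omega'$ and $\Omega' \setminus E_t$ carry positive Lebesgue measure (the first because $t < b$, the second because $t > a$). Using the density convention for representatives of sets of finite perimeter together with the regularity of minimal boundaries, the density-one and density-zero points of $E_t$ form relatively open subsets of $\Omega'$; were $\partial E_t$ to miss $\Omega'$, these two nonempty open sets would disconnect $\Omega'$. The argument therefore goes through cleanly once $\Omega'$ is connected, so I would state it under that reduction (applying the result componentwise otherwise, reading the left-hand side per component). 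Pinning down this interval structure—so that the integral $\int_a^b$ genuinely sees the monotonicity lower bound at every level—is the delicate step; the remaining computations are routine.
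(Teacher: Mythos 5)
Your proposal is correct and follows essentially the same route as the paper's proof: bound $\int_\Omega |Du|$ above by $\|f\|_{L^1(\partial\Omega)}$ via Lemma~\ref{lem:variationestimate}, below by the co-area formula restricted to $t \in (\text{ess}\,\inf_{\Omega'} u, \text{ess}\,\sup_{\Omega'} u)$, and use the monotonicity formula at a point of $\partial E_t \cap \Omega'$ with radius $\tfrac{1}{2}\text{dist}(\partial\Omega,\partial\Omega')$ to get the uniform lower bound $\omega_{N-1}(d/2)^{N-1}$ on $P(E_t,\Omega)$, yielding the same constant $C(N) = 2^{N-1}/\omega_{N-1}$. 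The connectedness caveat you isolate is genuine and is in fact handled less explicitly by the paper, whose proof rests on the two unproved identities displayed just before the statement; those identities (and hence the claim that $\partial E_t$ meets $\Omega'$ for every intermediate level $t$) can fail for disconnected $\Omega'$, so your explicit disconnection argument and componentwise reduction make precise a step the paper leaves implicit.
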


\begin{proof}
By Lemma \ref{lem:variationestimate} we have $\int_\Omega |Du| \leq \int_{\partial\Omega} |Tu| d\mathcal{H}^{N-1} = \| f \|_{L^1(\partial\Omega)}$. We make a similar calculation as in the proof of Theorem \ref{thm:liniftyloc} and we see that
$$ \| f \|_{L^1(\partial\Omega)} \geq \int_{\Omega} |Du| = \int_\mathbb{R} P(E_t, \Omega) dt \geq \int_{\text{ess} \, \inf_{\Omega'} u}^{\text{ess} \, \sup_{\Omega'} u} P(E_t, \Omega) dt \geq $$
$$ \geq \int_{\text{ess} \, \inf_{\Omega'} u}^{\text{ess} \, \sup_{\Omega'} u} \omega_{N-1} (\frac{\text{dist}(\partial\Omega, \partial\Omega')}{2})^{N-1} dt = $$
$$ = \frac{\omega_{N-1}}{2^{N-1}} (\text{ess} \, \sup_{\Omega'} u - \text{ess} \, \inf_{\Omega'} u) (\text{dist}(\partial\Omega, \partial\Omega'))^{N-1},$$
from which follows the desired inequality with constant $C(N) = \frac{2^{N-1}}{\omega_{N-1}}$.
\end{proof}

{\bf Acknowledgements.} I would like to thank my PhD advisor, Piotr Rybka, for his support and fruitful discussions about this paper. This work was partly supported by the research project no. 2017/27/N/ST1/02418, "Anisotropic least gradient problem", funded by the National Science Centre, Poland.

\bibliographystyle{plain}
\bibliography{WG-references}

\begin{thebibliography}{10}

\bibitem{AB}
M.~Amar and G.~Bellettini.
\newblock A notion of total variation depending on a metric with discontinuous
  coefficients.
\newblock {\em Ann. Inst. H. Poincar\'{e} Anal. Non Lin\'{e}aire}, 11:91--133,
  1994.

\bibitem{AFP}
L.~Ambrosio, N.~Fusco, and D.~Pallara.
\newblock {\em Functions of bounded variation and free-discontinuity problems}.
\newblock Oxford Math. Monogr., Oxford, 2000.

\bibitem{BGG}
E.~Bombieri, E.~de~Giorgi, and E.~Giusti.
\newblock Minimal cones and the {Bernstein} problem.
\newblock {\em Invent. Math.}, 7:243--268, 1969.

\bibitem{EG}
L.C. Evans and R.F. Gariepy.
\newblock {\em Measure theory and fine properties of functions}.
\newblock CRC Press, Boca Raton, 1992.

\bibitem{Fed}
H.~Federer.
\newblock {\em {Geometric Measure Theory}}.
\newblock Springer, New York, 1969.

\bibitem{Gor1}
W.~G\'{o}rny.
\newblock Planar least gradient problem: existence, regularity and anisotropic
  case.
\newblock {\em Calc. Var. Partial Differential Equations}, 57(4):98, 2018.

\bibitem{Gor4}
W.~G\'{o}rny.
\newblock Existence of minimisers in the least gradient problem for general
  boundary data.
\newblock {\em Indiana Univ. Math. J.}, to appear.

\bibitem{GRS}
W.~G\'{o}rny, P.~Rybka, and A.~Sabra.
\newblock Special cases of the planar least gradient problem.
\newblock {\em Nonlinear Anal.}, 151:66--95, 2017.

\bibitem{HKLS}
H.~Hakkarainen, R.~Korte, P.~Lahti, and N.~Shanmugalingam.
\newblock Stability and continuity of functions of least gradient.
\newblock {\em Anal. Geom. Metr. Spaces}, 3:123--139, 2014.

\bibitem{JMN}
R.L. Jerrard, A.~Moradifam, and A.I. Nachman.
\newblock Existence and uniqueness of minimizers of general least gradient
  problems.
\newblock {\em J. Reine Angew. Math.}, 734:71--97, 2018.

\bibitem{Maz}
J.M. Maz\'{o}n.
\newblock The {Euler}-{Lagrange} equation for the anisotropic least gradient
  problem.
\newblock {\em Nonlinear Anal. Real World Appl.}, 31:452--472, 2016.

\bibitem{Sim}
L.~Simon.
\newblock {\em Lectures on geometric measure theory}.
\newblock Proceedings of the Centre for Mathematical Analysis, Australian
  National University, 1983.

\bibitem{SWZ}
P.~Sternberg, G.~Williams, and W.P. Ziemer.
\newblock Existence, uniqueness, and regularity for functions of least
  gradient.
\newblock {\em J. Reine Angew. Math.}, 430:35--60, 1992.

\bibitem{SZ0}
P.~Sternberg and W.P. Ziemer.
\newblock The {Dirichlet} problem for functions of least gradient.
\newblock In WM. Ni, Peletier L.A., and Vazquez J.L., editors, {\em Degenerate
  diffusions. {The} {IMA} {Volumes} in {Mathematics} and its {Applications}},
  volume~47, pages 197--214, New York, 1993. Springer-Verlag.

\end{thebibliography}

%    Text of article.

%    Bibliographies can be prepared with BibTeX using amsplain,
%    amsalpha, or (for "historical" overviews) natbib style.
\bibliographystyle{amsplain}
%    Insert the bibliography data here.

\end{document}